\numberwithin{equation}{section}
\numberwithin{figure}{section}
\theoremstyle{plain}
\newtheorem{thm}{\protect\theoremname}
  \theoremstyle{plain}
  \newtheorem{lem}[thm]{\protect\lemmaname}
  \theoremstyle{plain}
  \theoremstyle{remark}
  \newtheorem{rem}[thm]{\protect\remarkname}
  \providecommand{\lemmaname}{Lemma}
  \providecommand{\remarkname}{Remark}
\providecommand{\theoremname}{Theorem}
\providecommand{\corollaryname}{Corollary}
\begin{document}

\title{Hyper $b$-ary expansions and Stern polynomials}

\author{Tanay Wakhare$^\ast$$^\dag$, Caleb Kendrick$^\ast$, Matthew Chung$^\ast$, Catherine Cassell$^\ast$, Stefano Santini$^\ast$, William Colin Mosley$^\ast$, Anand Raghu$^\ast$, Robert Morrison$^\ast$, Iman Schurman$^\ast$, Timothy Kevin Beal$^\ast$, Matthew Patrick$^\ast$}

\thanks{{\scriptsize
\hskip -0.4 true cm MSC(2010): Primary: 05A17; Secondary: 11B37.
\newline Keywords: Stern diatomic sequence; Stern polynomials; Hyperbinary partitions.\\
%Received: 15 March 2018, Accepted: 21 May 2018.\\
$^\ast$Corresponding author
}}

\address{$^\ast$University of Maryland, College Park, MD 20742, USA}
\email{$^\dag$~twakhare@gmail.com}

\maketitle
\begin{abstract}
We study a recently introduced base $b$ polynomial analog of Stern's diatomic sequence, which generalizes Stern polynomials of Klavar, Dilcher, Ericksen, Mansour, Stolarsky, and others. We lift some basic properties of base $2$ Stern polynomials to arbitrary base, and introduce a matrix characterization of Stern polynomials. By specializing, we recover some new number theoretic results about hyper $b$-ary partitions, which count partitions of $n$ into powers of $b$.
\end{abstract}

\section{Introduction}
The aim of this note is to study the properties of arbitrary base polynomial analogs of the Stern sequence, recently introduced in \cite{Karl1}. These are intimately connected to both the theory of automatic sequences and binary partitions. The classical \textbf{Stern diatomic sequence} is defined by the two recurrences
\begin{align*}
s(2n) &= s(n),\\
s({2n+1}) &= s(n)+s(n+1),
\end{align*}
with the initial conditions $s(0)= 0, s(1) = 1$. The Stern sequence is closely linked to the theory of automatic sequences \cite{Allouche}; informally, an automatic sequence can be represented by a finite automaton, to which we feed in the digits of $n$ in sequential order. These sequences exhibit interesting aspects of both order and disorder, making them ideal for modeling semi-chaotic physical systems such as quasicrystals. 

For our purposes, the most important property of the Stern sequence is its combinatorial interpretation: $s({n+1})$ is equal to the number of \textbf{hyperbinary expansions} of $n$, which is the number of ways to write $n$ as the sum of powers of $2$, with each part used at most twice. While this may seem like a contrived definition, it is one of the simplest possible binary partition functions. While we typically consider partition functions built from sets of integers with nonzero asymptotic density, binary partitions are restricted partitions built from the parts $\{1,2,4,8,16,\ldots\}$. When each part can be used at most once, this is simply the base $2$ expansion of $n$. When we place no restriction on the number of parts, we obtain the binary partition function of Churchhouse \cite{Churchhouse}. In this paper, we consider Stern numbers, which allow each part to be used at most twice. Any progress on the Stern case will provide intuition for treating more complicated binary partition functions, which qualitatively behave very differently from more commonly studied partition functions.

We can then consider a base $b$ analog of the Stern sequence, such that the $(n+1)^{\text{th}}$ term equals the number of hyper $b$-ary expansions (also known as base $b$ overexpansions) of $n$. These are partitions from the set $\{1,b,b^2,b^3,\ldots\}$, such that each part can be used at most $b$ times. We will denote these by $s_b(n)$, and note that they are completely determined by the recurrence
\begin{align*}
&s_b({b(n-1)+j+1}) = s_b(n), \thinspace\thinspace\thinspace(1\leq j \leq b-1), \\
&s_b({bn+1}) =s_b(n)+s_b(n+1).
\end{align*}
The goal of this paper is to consider an even more general case, the \textbf{base $b$ Stern polynomials}, which are polynomials in the $b$ variables $\{z_1,\ldots,z_b\}$, and are defined by the following set of $b$ recurrences:
\begin{align*}
&w_T(b(n-1)+j+1|z_1\ldots,z_b) = z_jw_T(n|z_1^{t_1}\ldots,z_b^{t_b}), \thinspace\thinspace\thinspace(1\leq j \leq b-1), \\
&w_T(bn+1|z_1\ldots,z_b) = z_bw_T(n|z_1^{t_1}\ldots,z_b^{t_b}) + w_T(n+1|z_1^{t_1}\ldots,z_b^{t_b}).
\end{align*}
These were recently introduced by Dilcher and Ericksen \cite{Karl1}. When $z_1=\cdots =z_b=1,$ we have the reduction $w_T(n+1|1,\ldots, 1)=s_b(n)$. In the case $b=2$, with certain forms of  $t_i$ and $z_i$, we recover polynomial analogues of the Stern sequence introduced by Klav$\v{z}$ar et. al \cite{Klavzar}, Mansour \cite{Mansour}, and Dilcher and Stolarsky \cite{Karl2}. These are the most general Stern polynomials studied to date.

In particular, we introduce a generating product for the base $b$ Stern polynomials and prove an identity for a finite version of this product. We then introduce a representation for Stern polynomials in terms of products of $2\times 2$ matrices, following results for automatic sequences due to Allouche and Shallit \cite{Allouche}. Finally, we characterize the behavior of $w_T$ at indices where $s_b(n)$ is maximized, and generalize some recent continued fractions of Dilcher and Ericksen \cite{Karl3}.
%These are the most general possible Stern polynomials, an intuition which we will quantify in the next section.

\section{Product representation}
Following from the interpretation of $s(n)$ as counting hyperbinary expansions, we have the generating product
$$\sum_{n=1}^\infty s(n) t^n = t \prod_{i=0}^\infty \left(1+t^{2^i}+t^{2^{i+1}}\right).$$
Interpreted as a product generating partitions, each term in the product, $1+t^{2^i}+t^{2^{i+1}}$, corresponds to the choice of whether to pick no occurrences of the part $2^i$, one occurence of this part, or two. This bijectively generates all hyperbinary expansions of $n$. We can find an analog of this product for the base $b$ Stern polynomials.

\begin{thm}
The base $b$ Stern polynomials have the generating product
\begin{equation}\label{infprod}
 \sum_{n=1}^\infty w_T(n|z_1\ldots,z_b) t^{n}= t\prod_{i=0}^\infty \left(  1+\sum_{j=1}^b z_j^{t_j^i}t^{j\cdot b^i} \right). 
\end{equation}
\end{thm}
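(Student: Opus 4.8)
The plan is to show that both sides of \eqref{infprod}, regarded as formal power series in $t$ whose coefficients are polynomials in $z_1,\ldots,z_b$, satisfy one and the same functional equation, and that this functional equation together with the normalization $w_T(1|z_1,\ldots,z_b)=1$ determines the series uniquely. Write
$$F(t|z_1,\ldots,z_b):=\sum_{n=1}^\infty w_T(n|z_1,\ldots,z_b)\,t^n,\qquad P(t|z_1,\ldots,z_b):=t\prod_{i=0}^\infty\Bigl(1+\sum_{j=1}^b z_j^{t_j^i}t^{j\cdot b^i}\Bigr).$$
Note that $P$ is well defined: the $i$-th factor of the product equals $1$ plus terms of $t$-order at least $b^i$, so only finitely many factors influence any given coefficient of $t$.

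First I would establish the functional equation for $P$. Pulling the $i=0$ factor out front and reindexing $i\mapsto i+1$ in the remaining product, one sees that the substitutions $t\mapsto t^b$ and $z_j\mapsto z_j^{t_j}$ turn $z_j^{t_j^i}$ into $z_j^{t_j^{i+1}}$ and $t^{j\cdot b^i}$ into $t^{j\cdot b^{i+1}}$, which gives immediately
$$P(t|z_1,\ldots,z_b)=t^{1-b}\Bigl(1+\sum_{j=1}^b z_j t^j\Bigr)\,P(t^b|z_1^{t_1},\ldots,z_b^{t_b}).$$

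The main step is to prove the same identity with $F$ in place of $P$, and I expect this index bookkeeping to be the only real obstacle. I would split $\sum_{n\ge1}$ according to the residue of $n$ modulo $b$: the term $n=1$ contributes $w_T(1)\,t=t$, while every $n\ge2$ is uniquely of the form $b(m-1)+j+1$ with $m\ge1$ and $1\le j\le b-1$, or of the form $bm+1$ with $m\ge1$. Substituting the two defining recurrences for $w_T$, factoring $t^{j+1-b}$ out of the first family of terms and $t$ out of the second, and re-collecting the inner sums as $F(t^b|z_1^{t_1},\ldots,z_b^{t_b})$, one obtains — after using $w_T(1|z_1^{t_1},\ldots,z_b^{t_b})=1$ to absorb the single boundary term that appears when re-collecting the $bm+1$ sum, and cancelling the two stray copies of $t$ —
$$F(t|z_1,\ldots,z_b)=\Bigl(t^{1-b}+\sum_{j=1}^{b-1}z_j t^{j+1-b}+z_b t\Bigr)F(t^b|z_1^{t_1},\ldots,z_b^{t_b}),$$
and the bracket is exactly $t^{1-b}\bigl(1+\sum_{j=1}^b z_j t^j\bigr)$, so $F$ obeys the same functional equation as $P$.

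Finally I would close the argument by uniqueness. Any formal power series satisfying this functional equation must have vanishing $t^0$-coefficient (otherwise the right-hand side would contain a negative power of $t$), and for every $n\ge2$ its $t^n$-coefficient is an explicit polynomial combination — with the substitution $z_j\mapsto z_j^{t_j}$ — of its $t^m$-coefficients for $m<n$; only the $t^1$-coefficient is not pinned down by the equation. Since the $t^1$-coefficients of $F$ and of $P$ are both $1$ (for $F$ because $w_T(1)=1$, for $P$ because the infinite product has constant term $1$), the homogeneous version of the functional equation satisfied by $F-P$ has vanishing $t^0$- and $t^1$-coefficients, and induction on $n$ forces $F-P=0$; that is, $F=P$, which is \eqref{infprod}. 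Alternatively, the same induction identifies $[t^n]P$ with $w_T(n|z_1,\ldots,z_b)$ directly from the defining recurrences, bypassing the functional equation for $F$.
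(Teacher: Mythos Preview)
Your proposal is correct and shares the paper's central step: split the generating series by residue classes modulo $b$, apply the defining recurrences, and obtain the functional equation
\[
F(t|z_1,\ldots,z_b)=t^{1-b}\Bigl(1+\sum_{j=1}^b z_j t^j\Bigr)F(t^b|z_1^{t_1},\ldots,z_b^{t_b}).
\]
(The paper carries this out for $b=3$ with the normalization $\sum w_T(n)t^{n-1}$, which removes the $t^{1-b}$ prefactor, but this is cosmetic.)

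Where you genuinely diverge is in the closing argument. The paper \emph{iterates} the functional equation and argues analytically that $F(t^n|\cdots)\to 1$; to justify this it invokes the nontrivial maximal-order bound $s_b(n)=O(n^{\log_b\phi})$ from \cite{Coons,Defant}, and implicitly restricts to $|z_i|<1$. You instead verify that the product $P$ satisfies the same functional equation and then prove uniqueness by induction on the $t$-degree in the ring of formal power series. Your route is more elementary and self-contained: it never leaves the formal setting, needs no growth estimate, and places no restriction on the $z_i$. The paper's route, on the other hand, explains \emph{why} the product appears---it is literally what one gets by unrolling the recursion---whereas in your argument the product has to be written down first and then checked.
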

\begin{proof}

We consider the simpler case of the hyperternary polynomials ($b=3$), defined by 
\begin{align*}
&w_T(3n-1|x,y,z) = xw_T(n|x^u,y^v,z^w) \\
&w_T(3n|x,y,z) = yw_T(n|x^u,y^v,z^w) \\
&w_T(3n+1|x,y,z) = zw_T(n|x^u,y^v,z^w)+w_T(n+1|x^u,y^v,z^w),
\end{align*}
with $w_T(0) = 0$ and $w_T(1)=1$. Now consider the generating function $$F(t|x,y,z) = \sum_{n=1}^\infty w_T(n|x,y,z) t^{n-1}.$$ We can form a functional equation for $F$ as follows:
\begin{align*}
F(t|x,y,z) &= \sum_{n=1}^\infty w_T(3n-1|x,y,z) t^{3n-2} + \sum_{n=1}^\infty w_T(3n|x,y,z) t^{3n-1} +  \sum_{n=1}^\infty w_T(3n+1|x,y,z) t^{3n} \\
&=\sum_{n=1}^\infty x w_T(n|x^u,y^v,z^w) t^{3n-2} + \sum_{n=1}^\infty y w_T(n|x^u,y^v,z^w) t^{3n-1} \\
&+  \sum_{n=1}^\infty zw_T(n|x^u,y^v,z^w) t^{3n} +  \sum_{n=1}^\infty w_T(n+1|x^u,y^v,z^w) t^{3n} \\
&= xtF(t^3|x^u,y^v,z^w)+ yt^2F(t^3|x^u,y^v,z^w)+  zt^3F(t^3|x^u,y^v,z^w) +  F(t^3|x^u,y^v,z^w) \\
&= (1+xt+yt^2+zt^3)F(t^3|x^u,y^v,z^w).
\end{align*}
We then iterate this functional equation while noting that for $|t|$ sufficiently small, $F(t^n|x,y,z) \to 1$ as $n\to \infty$. This is a subtle fact; if each $|z_i| < 1$ then by the triangle inequality we have $|w_T(n+1|z_1,\ldots,z_b)|<|w_T(n+1|1,\ldots,1)|$, which counts the number of hyper $b$-ary expansions of $n$. We then have a (difficult) characterization of the maximal order \cite[Theorem 1]{Coons}\cite{Defant} $$ \lim \sup_{n\to \infty} \frac{s_b(n)}{n^{\log_b \phi}} = \frac{\phi^{\log_b(b^2-1)}}{\sqrt{5}},$$
where $\phi=\frac12 \left(1 +\sqrt{5}\right)$ is the golden ratio. This shows that $s_b(n) = O(n^{\log_b \phi})$ grows polynomially, so that the generating function $F(t^n|x,y,z) $ will converge to its constant term of $1$ for sufficiently small $t$. Altogether, this gives us the infinite product representation
$$F(t|x,y,z) = \prod_{i=0}^\infty \left(  1+x^{u^i}t^{3^i}+y^{v^i}t^{2\cdot 3^i}+z^{w^i}t^{3\cdot3^i} \right). $$
The arbitrary base analog is proved exactly analogously. We omit the proof because it offers no new insight; the hyperternary case is much more obvious and notationally clearer. 
\end{proof}
\begin{rem}
Note that basically any polynomial generalization of hyperternary expansions begins with the infinite product and inserts variables somewhere inside the product, so that this product representation is excellent motivation for how to define a polynomial version of the base $b$ Stern sequence. This phenomenon is apparent for $b=2$; despite having very different recursive definitions, any form of base $2$ Stern polynomial that has been introduced thus far has a compact generating product.
\end{rem}

Using this infinite product as a starting point, we can derive several properties of the base $b$ Stern polynomials which mirror the base $2$ case. The advantage of this representation is that it gives us some intuition for exactly which theorems are possible to generalize, which isn't very apparent from the recursive definition.

\begin{table}[]
\begin{tabular}{|l|l|l|l|l|l|}
\hline
$n$ & $\omega_T(n|x,y,z)$ & $n$ & $\omega_T(n|x,y,z)$          & $n$ & $\omega_T(n|x,y,z)$                  \\ \hline
1   & $1$                 & 10  & $x^{u^2}+y^vz+z^w$           & 19  & $x^{u^2}y^vz+x^{u^2}z^w+y^{v^2}$     \\
2   & $x$                 & 11  & $x^{1+u^2}+xz^w$             & 20  & $x^{1+u^2}z^w+xy^{v^2}$              \\
3   & $y$                 & 12  & $x^{u^2}y+yz^w$              & 21  & $x^{u^2}yz^w+y^{1+v^2}$              \\
4   & $x^u+z$             & 13  & $x^{u+u^2}+x^{u^2}z+z^{1+w}$ & 22  & $x^{u^2}z^{1+w}+x^uy^{v^2}+y^{v^2}z$ \\
5   & $x^{1+u}$           & 14  & $x^{1+u+u^2}$                & 23  & $x^{1+u}y^{v^2}$                     \\
6   & $x^uy$              & 15  & $x^{u+u^2}y$                 & 24  & $x^uy^{1+v^2}$                       \\
7   & $x^uz+y^v$          & 16  & $x^{u+u^2}z+x^{u^2}y^v$      & 25  & $x^uy^{v^2}z+y^{v+v^2}$              \\
8   & $xy^v$              & 17  & $x^{1+u^2}y^v$               & 26  & $xy^{v+v^2}$                         \\
9   & $y^{1+v}$           & 18  & $x^{u^2}y^{1+v}$             & 27  & $y^{1+v+v^2}$                        \\ \hline
\end{tabular}
\caption{Small $n$ hyperternary polynomials \cite[Table 2]{Karl1}}
\label{Table1}
\end{table}

Another nice consequence is that we can recover the main theorem of \cite{Karl1}, essentially by inspection. We can check the following result with Table \ref{Table1} of small hyperternary polynomials. For what follows, let $\mathbb{H}_{b,n}$ denote the set of hyper $b$-ary expansions of $n\geq 1$. 
\begin{thm}\cite[Theorem 11]{Karl1}
For any integer $n\geq 1$ we have $$w_T(n+1|z_1,\ldots,z_b)  = \sum_{h\in \mathbb{H}_{b,n}} z_1^{p_{h,1}(t_1)}\cdots  z_b^{p_{h,b}(t_b)},$$
where for each $h$ in $\mathbb{H}_{b,n}$, the exponents $p_{h,1}(t_1),\ldots, p_{h,b}(t_b)$ are polynomials in $t_1,\ldots,t_b,$ respectively, with only $0$ and $1$ as coefficients. Furthermore, if for $1\leq j\leq b$ we write 
\begin{equation}\label{thm11.1}
p_{h,j}(t_j) = t_j^{\tau_j(1)}+ t_j^{\tau_j(2)}+\cdots +  t_j^{\tau_j(\nu_j)}, \thinspace\thinspace 0\leq \tau_j(1) < \cdots < \tau_j(\nu_j), \thinspace\thinspace \nu_j \geq 0,
\end{equation}
then the powers that are used exactly $j$ times in the hyper $b$-ary representation of $n$ are 
\begin{equation}
\label{thm11.2}b^{\tau_j(1)},b^{\tau_j(2)},\ldots,b^{\tau_j(\nu_j)}.
\end{equation}
If $\nu_j=0$ in \eqref{thm11.1}, we set $p_{h,j}(t_j)=0$ by convention, and accordingly \eqref{thm11.2} is the empty set.
\end{thm}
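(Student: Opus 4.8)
The plan is to extract the claimed combinatorial identity directly from the generating product in \eqref{infprod}, since the product already encodes hyper $b$-ary expansions in a transparent way. Rewriting \eqref{infprod} with the index shifted, we have $\sum_{n\geq 0} w_T(n+1\mid z_1,\ldots,z_b)\,t^{n} = \prod_{i\geq 0}\bigl(1+\sum_{j=1}^b z_j^{t_j^{\,i}} t^{j\cdot b^i}\bigr)$. Expanding the product, a monomial in $t$ of degree $n$ is obtained by choosing, for each $i\geq 0$, exactly one term from the $i$-th factor: either the $1$ (meaning the part $b^i$ is not used), or $z_j^{t_j^{\,i}} t^{j\cdot b^i}$ for some $1\leq j\leq b$ (meaning the part $b^i$ is used exactly $j$ times, contributing $j\cdot b^i$ to the sum). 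Thus each way of producing $t^n$ corresponds bijectively to an element $h\in\mathbb H_{b,n}$, and the $z$-weight it carries is $\prod_{i:\, b^i \text{ used } j_i \text{ times}} z_{j_i}^{t_{j_i}^{\,i}}$. Grouping the product over the distinct multiplicities $j$ gives $\prod_{j=1}^b z_j^{\,\sum_{i\in S_j(h)} t_j^{\,i}}$, where $S_j(h)=\{i : b^i \text{ appears exactly } j \text{ times in } h\}$. Setting $p_{h,j}(t_j) := \sum_{i\in S_j(h)} t_j^{\,i}$ yields exactly the stated formula, and by construction each $p_{h,j}$ is a sum of distinct powers of $t_j$, hence has coefficients in $\{0,1\}$.

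The second half of the theorem — the identification \eqref{thm11.2} of which powers of $b$ are used exactly $j$ times — is then immediate from the same bijection: the set $S_j(h)$ of exponents $i$ recording that $b^i$ is used exactly $j$ times is precisely the exponent set of $p_{h,j}(t_j)$, i.e.\ $\{\tau_j(1),\ldots,\tau_j(\nu_j)\}$ in the notation of \eqref{thm11.1}, so the parts used $j$ times are $b^{\tau_j(1)},\ldots,b^{\tau_j(\nu_j)}$. The convention $\nu_j=0\Rightarrow p_{h,j}(t_j)=0$ matches the empty-choice case where no part is used with multiplicity $j$. I would also note for completeness that the identity $w_T(n+1\mid 1,\ldots,1)=s_b(n)$ is recovered by setting all $z_j=1$: the sum then just counts $|\mathbb H_{b,n}|$, consistent with the combinatorial interpretation recalled in the introduction.

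For rigor I would first establish the bijection between $\mathbb H_{b,n}$ and the monomial-selection data of the product at the formal power series level — noting that only finitely many factors contribute a nontrivial choice when extracting the coefficient of $t^n$, so the expansion is legitimate and there is no convergence issue for this step (convergence was only needed to \emph{derive} the product in the preceding theorem, which we are entitled to assume). Then I would check that the weight attached to each selection factors in the claimed way, which is just a regrouping of a finite product of powers of the $z_j$. The main obstacle is essentially bookkeeping: one must be careful that the exponents $t_j^{\,i}$ come from the $i$-th factor of the product and therefore track the \emph{position} $i$ of the part $b^i$, not merely its multiplicity $j$ — so that distinct elements of $\mathbb H_{b,n}$ using the same multiset of multiplicities at different positions really do produce distinct monomials in the $z_j$ (as variables with polynomial exponents), which is what makes the right-hand side a faithful expansion rather than a collapsed sum. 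Making this positional dependence explicit via the sets $S_j(h)$ is the cleanest way to handle it.
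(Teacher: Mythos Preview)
Your proof is correct and follows essentially the same route as the paper's: both expand the product \eqref{infprod}, identify each monomial of degree $n$ in $t$ with a hyper $b$-ary expansion $h\in\mathbb H_{b,n}$, and read off the $z$-weight $\prod_i z_{a_i}^{t_{a_i}^{\,i}}$. Your version is more explicit about the regrouping via the sets $S_j(h)$ and the formal-power-series justification, but the underlying argument is identical.
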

\begin{proof}
Consider the infinite product \eqref{infprod}. If we have a hyper $b$-ary expansion of $n$ of the form $n = a_0 + a_1 b+ a_2b^2+\cdots$, where each $0\leq a_i\leq b$, then this contributes a term of the form $\prod_i z_{a_i}^{t^i_{a_i}}$ to the $(n+1)$th Stern polynomial. Summing over all hyperbinary expansions of $n$ completes the proof.
\end{proof}

We can use this product to give a base $b$ polynomial analog of the classic result $$\sum_{n=2^N+1}^{2^{N+1}}s(n) = 3^N ,$$
which was then generalized to base $2$ Stern polynomials \cite[Proposition 5.2]{Karl3} as (in our notation)
$$\prod_{i=0}^{N-1} \left( 2 + z^{t^i}\right) = \sum_{n=2^N+1}^{2^{N+1}}w_T(n|1,z).$$ Our direct proof is combinatorial and quite different from that of \cite{Karl3}, which inductively depends on recursive properties of the Stern polynomials. The fact that we have $z_1=1$ is in fact essential for this identity; inspecting small hyperternary polynomials shows that there is no closed form analog in terms of $\{z_1,\ldots,z_b\}$.
\begin{thm}
Let $l_N:= \frac{b^{N+1}-1}{b-1}$. Then we have the finite product 
$$ \prod_{i=0}^{N-1} \left( 2 +\sum_{j=2}^b z_j^{ b^i} \right) = \sum_{n=l_N-b^N+2}^{l_N+1}w_T(n|1,z_2,z_3\ldots,z_b). $$
\end{thm}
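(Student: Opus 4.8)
The plan is to extract the identity directly from the generating product \eqref{infprod} by isolating the block of indices $n$ whose base-$b$ expansion occupies exactly the digit positions $0,1,\ldots,N-1$ plus a leading $1$ in position $N$. First I would specialize \eqref{infprod} by setting $z_1=1$, which kills the $j=1$ term inside each factor, giving
\[
\sum_{n=1}^\infty w_T(n|1,z_2,\ldots,z_b)\,t^n = t\prod_{i=0}^\infty\left(1+\sum_{j=2}^b z_j^{t_j^i} t^{j\cdot b^i}\right).
\]
Note that the range $l_N-b^N+2 \le n \le l_N+1$ is exactly $\{b^N+1, b^N+2,\ldots,b^N + l_{N-1}+1\}$ since $l_N-b^N = b^{N-1}+\cdots+1 = l_{N-1}$; equivalently, writing $m=n-1$, these are the integers $m$ with $b^N \le m \le b^N + (b^N-1)/(b-1) - $ wait, more simply $b^N \le m < b^N + l_{N-1}+1$, i.e. the $m$ whose hyper-$b$-ary expansions all have top part $b^N$ used exactly once and all other parts drawn from $\{1,b,\ldots,b^{N-1}\}$.

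The key step is a bijective/combinatorial reading, exactly in the spirit of the proof of \cite[Theorem 11]{Karl1} reproduced above. By that theorem, $w_T(n|1,z_2,\ldots,z_b)$ is the sum over hyper-$b$-ary expansions $h$ of $m=n-1$ of the monomial $\prod_{j=2}^b z_j^{p_{h,j}(t_j)}$, where the parts used $j$ times in $h$ are the $b^i$ with $t_j^i$ appearing in $p_{h,j}$, and a part used once contributes nothing (because $z_1=1$). Summing $w_T(n|1,z_2,\ldots,z_b)$ over the stated range therefore sums these monomials over all hyper-$b$-ary expansions of all $m$ in $[b^N, b^N+l_{N-1}]$. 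I would then observe that such expansions are in bijection with arbitrary choices, independently for each $i=0,1,\ldots,N-1$, of a multiplicity $a_i\in\{0,1,\ldots,b\}$ for the part $b^i$, together with the forced choice $a_N=1$: indeed $\sum_{i=0}^{N-1} a_i b^i$ ranges over $[0,l_{N-1}]$ as the $a_i$ range over $\{0,\ldots,b\}$ (this is precisely the surjectivity-with-multiplicity fact underlying hyper-$b$-ary expansions, and it is exactly what makes $s_b$ count them), so $m=b^N+\sum_{i<N}a_i b^i$ ranges over the desired block. The leading digit $a_N=1$ contributes $z_1^{(\cdot)}=1$, hence nothing.

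Finally I would evaluate the contribution of position $i$: choosing $a_i=0$ contributes $1$, choosing $a_i=1$ contributes $z_1^{\cdots}=1$, and choosing $a_i=j$ for $2\le j\le b$ contributes $z_j^{t_j^i}$. Wait — here I must be careful about the substitution depth: in the finite-sum identity we are setting all $t_j=$ (free), but the statement as written has $z_j^{b^i}$ in the product, which corresponds to the specialization used elsewhere; so I would track that the exponent of $z_j$ coming from position $i$ is $t_j^i$, and the stated form $z_j^{b^i}$ is the instance $t_j=b$ — or, if the theorem is meant verbatim, I would note $z_j^{b^i}$ should read $z_j^{t_j^i}$ and the displayed factor is $2+\sum_{j=2}^b z_j^{t_j^i}$; either way the per-position generating factor is $1+1+\sum_{j=2}^b z_j^{t_j^i} = 2+\sum_{j=2}^b z_j^{t_j^i}$. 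Multiplying these independent contributions over $i=0,\ldots,N-1$ gives $\prod_{i=0}^{N-1}\bigl(2+\sum_{j=2}^b z_j^{t_j^i}\bigr)$, which is the left-hand side.

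The main obstacle is purely bookkeeping: matching the exponent appearing in \eqref{infprod} (namely $t_j^i$, i.e. the $i$-fold iterated substitution $t_j\mapsto t_j^{t_j}\mapsto\cdots$) against the form written in the theorem statement, and confirming that the "top digit forced to $1$" block is exactly the index range $[l_N-b^N+2,\,l_N+1]$; once the index translation $n\leftrightarrow m=n-1$ and the identity $l_N-b^N=l_{N-1}$ are in hand, the combinatorial argument is immediate and needs no induction, in contrast to the recursive proof of \cite[Proposition 5.2]{Karl3}.
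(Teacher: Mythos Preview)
Your argument has a genuine gap: the index translation is wrong, and with it the proposed bijection collapses. You assert that $l_N-b^N+2=b^N+1$, but in fact $l_N-b^N=l_{N-1}=\frac{b^N-1}{b-1}$, so the range of $m=n-1$ is $[l_{N-1}+1,\,l_N]$, which equals $[b^N,\,2^{N+1}-1]$ only when $b=2$. For $b\ge 3$ the lower endpoint $l_{N-1}+1$ is strictly smaller than $b^N$; e.g.\ for $b=3,N=1$ the range is $m\in\{2,3,4\}$, not $\{3,4\}$. Consequently the picture ``top digit $a_N$ forced to $1$, lower digits free'' is not what is being summed on the right-hand side.

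Even the bijection itself fails on its own terms: the tuples $(a_0,\ldots,a_{N-1})\in\{0,\ldots,b\}^N$ with $a_N=1$ appended give values $b^N+\sum_{i<N}a_ib^i$, and this sum ranges over $[0,\,b\,l_{N-1}]=[0,\,l_N-1]$, not $[0,l_{N-1}]$ as you write; so your tuples produce values up to $b^N+l_N-1>l_N$, outside the target, while expansions like $m=2=(2)_3$ in the target have $a_N=0$ and are missed. What the paper actually does is subtler: it takes the set $\mathcal H$ of tuples $(a_0,\ldots,a_{N-1})\in\{0,\ldots,b\}^N$ (no $a_N$ at all), notes that these are exactly the hyper-$b$-ary expansions in $\bigcup_{0\le m\le l_N}\mathbb H_{b,m}$ that avoid the part $b^N$, and then builds a bijection $\mathcal H\to\bigcup_{l_{N-1}+1\le m\le l_N}\mathbb H_{b,m}$ by appending $b^N$ (with multiplicity $1$) to those $h\in\mathcal H$ whose value is $\le l_{N-1}$ and leaving the rest alone; the inequality $l_{N-1}<b^N<l_N<2b^N$ makes this well defined and preserves all parts of multiplicity $\ge 2$, which is exactly what survives under $z_1=1$. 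Your expansion-counting instinct is right, but the missing idea is this two-case ``shift the low block up by $b^N$'' map rather than a uniform forced leading digit.
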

\begin{proof}
As before, let $\mathbb{H}_{b,n}$ denote the set of hyper $b$-ary expansions of $n$. Now consider the finite generating product
$$ P:=\prod_{i=0}^{N-1} \left(  1+\sum_{j=1}^b t^{j\cdot b^i} \right),$$ 
 and let $\mathcal{H}$ denote the set of hyper $b$-ary expansions generated by this product. We then have a bijection between $\mathcal{H}$ and $\cup_{n = l_N-b^N+1}^{l_N} \mathbb{H}_{b,n}$. Furthermore, this bijection preserves all parts of multiplicity $\geq 2$ in a hyper $b$-ary expansion.

First, note that $\mathcal{H}$ contains every hyper $b$-ary expansion in $\cup_{n =0}^{l_N} \mathbb{H}_{b,n}$, except those which contain the part $b^N$ with multiplicity $1$. This is because $P$ generates hyperbinary expansions into parts $\leq b^{N-1}$, so that the largest hyper $b$-ary expansion it can generate is for $l_N$. However, we have the crucial inequality $l_N -b^N < b^N< l_N$, which also implies $l_N<2b^N$, so $b^N$ cannot appear with multiplicity $2$. Therefore, we consider every hyper $b$-ary expansion for $0\leq n\leq l_N-b^N+1$ in $\mathcal{H}$ and append the part $b^N$ to it with multiplicity $1$. This converts it to a hyper $b$-ary expansion for some $ l_N-b^N\leq n \leq l_N $, and we can reverse the mapping by subtracting $b^N$ from any hyper $b$-ary expansion in $\cup_{n = l_N-b^N+1}^{l_N} \mathbb{H}_{b,n}$ in which it appears. This trivially preserves parts of multiplicity $\geq 2$.

Now, set $t=1$ and consider the product
$$ \prod_{i=0}^{N-1} \left( 2 +\sum_{j=2}^b z_j^{ b^i} \right).$$
This generates a sum over $\mathcal{H}$, where each monomial keeps track of every part in a hyper $b$-ary expansion of multiplicity $2$. The right hand side,
$$\sum_{n=l_N-b^N+2}^{l_N+1}w_T(n+1|1,z_2,z_3\ldots,z_b),$$
generates a sum over $\cup_{n = l_N-b^N+1}^{l_N} \mathbb{H}_{b,n}$ which also only tracks parts of multiplicity $\geq 2$. Hence, by our bijection they must be equal.
\end{proof}
Even when $z=1$, this identity for the number of hyper $b$-ary expansions appears to be new: $$(b+1)^N = \sum_{n=l_N-b^N +1}^{l_N} s_b(n).$$

\section{Matrix Representation}
We can also describe an explicit matrix characterization of the Stern polynomials. This is a variant of Allouche and Shallit's famous result \cite{Allouche} about $k$-regular sequences, a generalization of automatic sequences. There are several equivalent definitions of $k$-regular sequences, the most basic of which is that the sequence $s(n)$ is $k$-regular if the \textit{$k$-kernel} $\{(s(k^en+r))_{n\geq 0} | e\geq 0 \text{ and } 0 \leq r \leq k^e-1\}$ generates a finite dimensional vector space over $\mathbb{Q}$. 
\begin{thm} \cite{Allouche}
The integer sequence $\{s(n)\}$ is $k$-regular if and only if there exists positive integers $m$ and $d$, matrices $\mathbf{A}_0,\ldots,\mathbf{A}_{b-1} \in \mathbb{Z}^{d\times d}$, and vectors $\mathbf{v}, \mathbf{w} \in \mathbb{Z}^d$ such that $$s(n) = \mathbf{w}^T\mathbf{A}_{i_0}\cdots \mathbf{A}_{i_s}\mathbf{v}, $$
where $i_s\cdots i_0$ is the base $b$ expansion of $n$.
\end{thm}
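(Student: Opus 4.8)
The plan is to establish both implications by the standard section-operator argument, keeping track of the least-significant-digit-first product order of the statement. Throughout write $k$ for the base (the ``$b$'' of the displayed formula), and for a digit $j\in\{0,\dots,k-1\}$ let $\Psi_j$ be the section operator on integer sequences defined by $(\Psi_j T)(n)=T(kn+j)$. Two elementary facts do all the work: first, $T(n)=(\Psi_{i_0}T)(\lfloor n/k\rfloor)$ whenever $i_0=n\bmod k$, so that iterating the $\Psi_j$ strips off the base-$k$ digits of $n$ from the right; and second, $\Psi_j$ carries the $k$-kernel sequence $(s(k^{e}n+r))_n$ to $(s(k^{e+1}n+r'))_n$ with $r'=k^{e}j+r$, which is again a $k$-kernel sequence since $0\le r'<k^{e+1}$.

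For the ``if'' direction I would take the representation $s(n)=\mathbf{w}^{T}\mathbf{A}_{i_0}\cdots\mathbf{A}_{i_s}\mathbf{v}$, fix $e\ge0$ and $0\le r<k^{e}$, and write $r_{e-1}\cdots r_0$ for the base-$k$ numeral of $r$ padded with zeros to length $e$. For $n\ge1$ the base-$k$ numeral of $k^{e}n+r$ is the concatenation $i_s\cdots i_0\,r_{e-1}\cdots r_0$, hence
$$s(k^{e}n+r)=(\mathbf{w}^{T}\mathbf{A}_{r_0}\cdots\mathbf{A}_{r_{e-1}})\,\mathbf{A}_{i_0}\cdots\mathbf{A}_{i_s}\mathbf{v}.$$
Passing to coordinates, this exhibits each $k$-kernel sequence --- up to its single value at $n=0$ --- as a fixed $\mathbb{Q}$-linear combination of the $d$ sequences $n\mapsto(\mathbf{A}_{i_0}\cdots\mathbf{A}_{i_s}\mathbf{v})_{\ell}$, $1\le\ell\le d$. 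So the $\mathbb{Q}$-span of the $k$-kernel has dimension at most $d+1$, and $\{s(n)\}$ is $k$-regular.

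For the ``only if'' direction I would let $V$ be the $\mathbb{Q}$-span of the $k$-kernel, finite-dimensional by hypothesis, say of dimension $d$. By the second fact each $\Psi_j$ maps the $k$-kernel into itself and hence restricts to a linear operator on $V$; moreover $s=(s(n))_n$ lies in $V$ (it is the kernel sequence for $e=r=0$) and $T\mapsto T(0)$ is a linear functional on $V$. Fixing a basis of $V$, let $\mathbf{A}_j$ be the matrix of $\Psi_j|_V$, let $\mathbf{v}$ be the coordinate vector of $s$, and let $\mathbf{w}^{T}$ be the row vector of the functional $T\mapsto T(0)$. The first fact gives $s(n)=(\Psi_{i_s}\cdots\Psi_{i_1}\Psi_{i_0}s)(0)$, which in coordinates reads $s(n)=\mathbf{w}^{T}\mathbf{A}_{i_s}\cdots\mathbf{A}_{i_0}\mathbf{v}$; transposing this scalar and relabelling $\mathbf{A}_j\mapsto\mathbf{A}_j^{T}$ together with $\mathbf{v}\leftrightarrow\mathbf{w}$ produces precisely the stated order $\mathbf{w}^{T}\mathbf{A}_{i_0}\cdots\mathbf{A}_{i_s}\mathbf{v}$. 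Leading zeros are harmless, since $(\Psi_0T)(0)=T(0)$ identically forces $\mathbf{w}^{T}\mathbf{A}_0=\mathbf{w}^{T}$, so the formula holds for all $n\ge0$ with the empty product read as the identity.

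The one step that is not pure bookkeeping --- and the part I expect to cost the most care --- is integrality: the hypothesis only produces $V$ and the operators $\Psi_j$ over $\mathbb{Q}$, whereas the statement wants $\mathbf{A}_j\in\mathbb{Z}^{d\times d}$ and $\mathbf{v},\mathbf{w}\in\mathbb{Z}^{d}$, and one cannot merely clear denominators because the number of factors in $\mathbf{A}_{i_0}\cdots\mathbf{A}_{i_s}$ grows with $n$. The remedy is to work with the $\mathbb{Z}$-module $M$ generated by the $k$-kernel rather than with $V$. This $M$ is finitely generated: it consists of integer-valued sequences lying in $V$, and writing an integer sequence in a fixed $\mathbb{Q}$-basis of $V$ forces its coordinates to have bounded denominators (apply Cramer's rule to a nonsingular $d\times d$ block of the basis sequences), so the $\mathbb{Z}$-module of all integer sequences in $V$ is finitely generated and hence so is its submodule $M$. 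Being torsion-free, $M$ is free of rank $d$; since it contains $s$, is stable under every $\Psi_j$, and carries a $\mathbb{Z}$-valued evaluation at $0$, re-running the ``only if'' construction with a $\mathbb{Z}$-basis of $M$ in place of a $\mathbb{Q}$-basis of $V$ yields the required integer matrices and vectors, with no enlargement of $d$.
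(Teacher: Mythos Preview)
The paper does not supply its own proof of this theorem: it is quoted from Allouche--Shallit with a citation and no accompanying \texttt{proof} environment, so there is nothing in the paper to compare your argument against. What you have written is essentially the standard proof one finds in that reference --- the section-operator/$k$-kernel argument for both directions --- and it is correct, including the extra care you take to upgrade the rational linear representation to an integral one by passing to the free $\mathbb{Z}$-module generated by the $k$-kernel.

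Two minor points of presentation, neither a real gap. First, after you transpose and relabel to obtain the least-significant-first product order, the leading-zero invariance should read $\mathbf{A}_0\mathbf{v}=\mathbf{v}$ in the new notation rather than $\mathbf{w}^{T}\mathbf{A}_0=\mathbf{w}^{T}$; you have stated the identity in the pre-transpose variables immediately after performing the transpose, which is momentarily confusing though mathematically the same. Second, in the ``if'' direction your ``$+1$'' for the $n=0$ anomaly is fine, but one can avoid it entirely by observing that the $d$ coordinate sequences $n\mapsto(\mathbf{A}_{i_0}\cdots\mathbf{A}_{i_s}\mathbf{v})_\ell$ are themselves closed under every $\Psi_j$ and contain $s$ as a $\mathbb{Q}$-combination even at $n=0$, once the leading-zero invariance is in hand.
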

The base $b$ Stern sequence satisfies this characterization \cite{Coons} with $$
\mathbf{w} =\begin{pmatrix} 
1 \\
0
\end{pmatrix}
, 
\mathbf{v} =\begin{pmatrix} 
0 \\
1
\end{pmatrix}
, \mathbf{A}_0 =\begin{pmatrix} 
1 & 0 \\
1 & 1
\end{pmatrix}
, \mathbf{A}_1 =\begin{pmatrix} 
1 & 1 \\
0 & 1
\end{pmatrix}
, \mathbf{A}_i =\begin{pmatrix} 
0 & 1\\
0 & 1
\end{pmatrix}
, \thinspace 2 \leq i \leq b-1.$$
We can derive an analog of this result for Stern polynomials, at the cost of parametrizing the matrices. 
\begin{thm}
Let $i_s\cdots i_0$ be the base $b$ expansion of $n$. Then
 $$w_T(n|z_1\ldots, z_b) = \begin{pmatrix} 1 & 0\end{pmatrix}\mathbf{A}_{i_0}(0)\cdots \mathbf{A}_{i_s}(s)\begin{pmatrix} 0 \\1\end{pmatrix}, $$
with $$\mathbf{A}_0(d) =\begin{pmatrix} 
z_{b-1}^{t_{b-1}^d} & 0 \\
z_{b}^{t_{b}^d} & 1
\end{pmatrix}
, \mathbf{A}_1(d) =\begin{pmatrix} 
z_{b}^{t_{b}^d} & 1 \\
0 & z_{1}^{t_{1}^d}
\end{pmatrix}
, \mathbf{A}_i(d) =\begin{pmatrix} 
0 & z_{i-1}^{t_{i-1}^d}\\
0 & z_{i}^{t_{i}^d}
\end{pmatrix}
, \thinspace 2 \leq i \leq b-1.$$
\end{thm}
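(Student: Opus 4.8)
The plan is to peel off the base-$b$ digits of $n$ one at a time while tracking the pair $\bigl(w_T(m|\cdots),\,w_T(m+1|\cdots)\bigr)$ rather than $w_T(m|\cdots)$ alone, mirroring the classical matrix encoding of regular sequences. Write the base-$b$ expansion of $n$ as $i_s\cdots i_1 i_0$ with $i_s\neq 0$, and for $0\le k\le s+1$ let $n_k$ be the integer whose base-$b$ digits are $i_s i_{s-1}\cdots i_k$, so that $n_0=n$, $n_k=b\,n_{k+1}+i_k$, and $n_{s+1}=0$. I would then define the column vectors
$$\mathbf{v}_k:=\begin{pmatrix} w_T(n_k\,|\,z_1^{t_1^k},\ldots,z_b^{t_b^k})\\ w_T(n_k+1\,|\,z_1^{t_1^k},\ldots,z_b^{t_b^k})\end{pmatrix},\qquad 0\le k\le s+1,$$
where the superscript $k$ records that the substitution $z_i\mapsto z_i^{t_i}$ built into the recurrences has been applied $k$ times. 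Since $w_T(0)=0$ and $w_T(1)=1$, one has $\mathbf{v}_{s+1}=\begin{pmatrix}0\\1\end{pmatrix}$, and by construction $w_T(n|z_1,\ldots,z_b)=\begin{pmatrix}1&0\end{pmatrix}\mathbf{v}_0$. So the whole theorem reduces to the single-step identity $\mathbf{v}_k=\mathbf{A}_{i_k}(k)\,\mathbf{v}_{k+1}$ for $0\le k\le s$, from which telescoping gives $\mathbf{v}_0=\mathbf{A}_{i_0}(0)\mathbf{A}_{i_1}(1)\cdots\mathbf{A}_{i_s}(s)\,\mathbf{v}_{s+1}$, which is the desired formula.

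To prove the single-step identity, fix $k$ and split on the value of the digit $i_k$. In each case the defining recurrences express $w_T(n_k|\cdots)$ and $w_T(n_k+1|\cdots)$ in terms of $w_T(n_{k+1}|\cdots)$ and $w_T(n_{k+1}+1|\cdots)$ with the variables substituted once more, i.e.\ with every $z_i^{t_i^k}$ replaced by $\bigl(z_i^{t_i^k}\bigr)^{t_i}=z_i^{t_i^{k+1}}$, and reading off the coefficients produces exactly $\mathbf{A}_{i_k}(k)$. Concretely: if $i_k=0$ then $n_k=b\,n_{k+1}$ and $n_k+1=b\,n_{k+1}+1$, so the first recurrence (with $j=b-1$) and the second recurrence give $w_T(n_k|\cdots)=z_{b-1}^{t_{b-1}^k}\,w_T(n_{k+1}|\cdots)$ and $w_T(n_k+1|\cdots)=z_b^{t_b^k}\,w_T(n_{k+1}|\cdots)+w_T(n_{k+1}+1|\cdots)$, which are the columns of $\mathbf{A}_0(k)$; if $i_k=1$ then $n_k=b\,n_{k+1}+1$ and $n_k+1=b\,n_{k+1}+2$, and the second recurrence together with the first recurrence (now with $j=1$) give the columns of $\mathbf{A}_1(k)$; and if $2\le i_k\le b-1$ then $n_k=b\,n_{k+1}+i_k$ and $n_k+1=b\,n_{k+1}+(i_k+1)$ are both handled by the first recurrence, with $j=i_k-1$ and $j=i_k$ respectively, the borderline case $i_k+1=b$ merely making the second index a multiple of $b$ (the $j=b-1$ instance), which still matches the $z_{i_k}^{t_{i_k}^k}$ entry of $\mathbf{A}_{i_k}(k)$.

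Two bookkeeping points finish the argument. First, applying the recurrences at the index $n_{k+1}$ is legitimate exactly when $n_{k+1}\ge1$, which is automatic for $k<s$ since $n_{k+1}$ still carries the nonzero leading digit $i_s$; the case $k=s$ is then the base of the recursion, where $n_s=i_s$ is a single digit in $\{1,\ldots,b-1\}$ and one verifies $\mathbf{v}_s=\mathbf{A}_{i_s}(s)\begin{pmatrix}0\\1\end{pmatrix}$ directly from $w_T(1)=1$ and the identity $w_T(m|z_1^{t_1^s},\ldots,z_b^{t_b^s})=z_{m-1}^{t_{m-1}^s}$ for $2\le m\le b$ (alternatively, the recurrences remain valid at the boundary once one reads $w_T(0)=0$, so the single-step identity holds uniformly). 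Second, and this is the only genuine subtlety, one must keep the substitution levels straight: because the recurrences push $z_i\mapsto z_i^{t_i}$, descending from $n_k$ to $n_{k+1}$ raises the exponent tower by one, which is precisely why $\mathbf{A}_{i_k}$ is evaluated at argument $k$ and not, say, at $s-k$. I expect this exponent bookkeeping (and getting the leading-digit base case right) to be the main obstacle; the matrix identities themselves are immediate from the recurrences, and the final formula can be spot-checked against Table~\ref{Table1} --- for instance $n=9=100_3$ gives $\mathbf{A}_0(0)\mathbf{A}_0(1)\mathbf{A}_1(2)\begin{pmatrix}0\\1\end{pmatrix}$, whose top entry is $y^{1+v}=w_T(9|x,y,z)$.
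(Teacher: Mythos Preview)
Your proof is correct and follows essentially the same approach as the paper: both arguments track the pair $(w_T(m),w_T(m+1))$, peel off base-$b$ digits from the least significant end, split into the same three cases on the digit value, and handle the substitution $z_i\mapsto z_i^{t_i}$ by a level shift. The only cosmetic difference is that the paper phrases this as induction on the number of digits (proving the second column of $\mathbf{A}_{i_0}(0)\cdots\mathbf{A}_{i_s}(s)$ equals $(w_T(n),w_T(n+1))^T$), whereas you phrase it as a telescoping vector recursion $\mathbf{v}_k=\mathbf{A}_{i_k}(k)\mathbf{v}_{k+1}$; these are the same computation.
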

\begin{proof}
We proceed by induction on the value of $s$, and will show the even stronger statement that $$\mathbf{A}_{i_0}(0)\cdots \mathbf{A}_{i_s}(s) = \begin{pmatrix} * & w_T(n|z_1,\ldots,z_b) \\ * & w_T(n+1|z_1,\ldots,z_b)\end{pmatrix},$$
where asterisks denote stuff we don't care about. Our base case is when $n$ is a single digit, and can easily be verified since we consider $\mathbf{A}_i(0)$, $0\leq i \leq b-1$.

We inductively assume that the theorem holds for all digit strings of length $\leq s+1$, and consider any string of length $s+1$, $i_{s}\cdots i_0,$ which represents the integer $n$. Then consider a digit string of length $s+2$, which we assume to be $i_{s+1}i_{s}\cdots i_0$, and which represents $bn+i_0$. By shifting the parametrization by $1$, we've mapped $z_i \mapsto z_i^{t_i}$, yielding
\begin{align*}
\mathbf{A}_{i_0}(0) \left[\mathbf{A}_{i_1}(1)\cdots \mathbf{A}_{i_{s+1}}(s+1) \right]=  \mathbf{A}_{i_0}(0)  \begin{pmatrix} * & w_T(n|z_1^{t_1},\ldots,z_b^{t_b}) \\ * & w_T(n+1|z_1^{t_1},\ldots,z_b^{t_b})\end{pmatrix}.
\end{align*}
 We then have $3$ cases:
\begin{enumerate}
%%%%%%%%%%%
\item $i_0 = 0$: 
\begin{align*}
 \begin{pmatrix} z_{b-1} & 0 \\z_{b} & 1\end{pmatrix}  \begin{pmatrix} * & w_T(n|z_1^{t_1},\ldots,z_b^{t_b}) \\ * & w_T(n+1|z_1^{t_1},\ldots,z_b^{t_b})\end{pmatrix} &=  \begin{pmatrix} * &  z_{b-1} w_T(n|z_1^{t_1},\ldots,z_b^{t_b}) \\ * &  z_{b} w_T(n|z_1^{t_1},\ldots,z_b^{t_b}) + w_T(n+1|z_1^{t_1},\ldots,z_b^{t_b})\end{pmatrix} \\
&=  \begin{pmatrix} * & w_T(bn|z_1,\ldots,z_b) \\ * & w_T(bn+1|z_1,\ldots,z_b)\end{pmatrix}.
\end{align*}
%%%%%%%%%%%
\item $i_0 = 1$: 
\begin{align*}
 \begin{pmatrix} z_{b} & 1 \\0 & z_1\end{pmatrix}  \begin{pmatrix} * & w_T(n|z_1^{t_1},\ldots,z_b^{t_b}) \\ * & w_T(n+1|z_1^{t_1},\ldots,z_b^{t_b})\end{pmatrix} &=  \begin{pmatrix} * &  z_{b} w_T(n|z_1^{t_1},\ldots,z_b^{t_b}) + w_T(n+1|z_1^{t_1},\ldots,z_b^{t_b})\\ * &  z_{1} w_T(n+1|z_1^{t_1},\ldots,z_b^{t_b}) \end{pmatrix} \\
&=  \begin{pmatrix} * & w_T(bn+1|z_1,\ldots,z_b) \\ * & w_T(bn+2|z_1,\ldots,z_b)\end{pmatrix}.
\end{align*}
%%%%%%%%%%
\item $2\leq i_0 \leq b-1$: 
\begin{align*}
 \begin{pmatrix} 0 & z_{i_0-1} \\ 0 & z_{i_0}\end{pmatrix}  \begin{pmatrix} * & w_T(n|z_1^{t_1},\ldots,z_b^{t_b}) \\ * & w_T(n+1|z_1^{t_1},\ldots,z_b^{t_b})\end{pmatrix} &=  \begin{pmatrix} * &  z_{i_0-1} w_T(n+1|z_1^{t_1},\ldots,z_b^{t_b}) \\ * &  z_{i_0} w_T(n+1|z_1^{t_1},\ldots,z_b^{t_b}) \end{pmatrix} \\
&=  \begin{pmatrix} * & w_T(bn+i_0|z_1,\ldots,z_b) \\ * & w_T(bn+i_0+1|z_1,\ldots,z_b)\end{pmatrix}.
\end{align*}
\end{enumerate}
Since in each case, we recover $bn+i_0$ and $bn+i_0+1$, we're done.
\end{proof}
For example, consider the hyperternary polynomials again, with $(z_1,z_2,z_3) \mapsto (x,y,z)$ and $(t_1,t_2,t_3) \mapsto (r,s,t)$ for notational clarity. Consider $n=7$, which is represented by the digit string $21$ in base $3$. Then
$$\mathbf{A}_1(0)\mathbf{A}_2(1) =  \begin{pmatrix} z & 1 \\0 & x\end{pmatrix}  \begin{pmatrix} 0 & x^r \\ 0 & y^s\end{pmatrix} = \begin{pmatrix} 0 & x^rz+y^s \\ 0 & xy^s\end{pmatrix} = \begin{pmatrix} 0 & w_T(7|x,y,z) \\ 0 & w_T(8|x,y,z)\end{pmatrix}, $$
just as we expect. To the best of our knowledge, only one such matrix characterization of a polynomial analog of the Stern sequence has previously been stated in the literature \cite{Spiegelhofer}, despite the fact that it is integral to considering the maximal order of the base $b$ Stern sequence \cite{Coons}. Spiegelhofer used this matrix representation to prove a digit reversal property for base $2$ Stern polynomials; however, an arbitrary base analog has been elusive.

\section{Behavior At Maximal Indices}
The maximal values of $s_b(n)$ have been well characterized \cite{Coons}. We can study the related values of $w_T$ at these indices, which will provide polynomial dissections of the Fibonacci numbers. Let $F_k=F_{k-1}+F_{k-2}$ denote the $k$th Fibonacci number, with the initial values $F_0=0, F_1=1$. Let $(a_1a_2\cdots a_l)_b$ denote the integer $a_1 b^{l-1} + a_2b^{l-2}+\cdots a_l$, which is the number read off in base $b$. We also let $((a)^kb)_b = ( \underbrace{a\cdots a }_{\text{$k$ times}}b)_b $. Note that much of the material of this section generalizes recent results of Dilcher and Ericksen \cite{Karl4}. and that by following the methods of that paper we should be able to describe other new finite and infinite continued fractions involving base $b$ Stern polynomials.
\begin{lem}\cite{Defant}
Let $k\geq 2$. Then
$$  \max_{b^{k-2}\leq n < b^{k-1}}s_b(n)= F_k. $$
Moreover, if $a_k$ denotes the smallest $n$ in the interval $[b^{k-2},b^{k-1})$ for which this maximum is attained, then
$$ a_k = \frac{b^k-1}{b^2-1}+\left(\frac{1-(-1)^k}{2}\right)\frac{b}{b+1}.$$
\end{lem}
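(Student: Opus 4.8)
The plan is to exploit the two-dimensional linear dynamics underlying the matrix representation of the previous section, reformulated directly in terms of the base $b$ recurrences for $s_b$. Reading the base $b$ digits of $m$ from the most to the least significant and carrying the state $(u,v):=(s_b(m),s_b(m+1))$, the recurrences show that appending a digit $0$ acts by $(u,v)\mapsto(u,u+v)$, appending a digit $1$ acts by $(u,v)\mapsto(u+v,v)$ (using $s_b(bm+1)=s_b(m)+s_b(m+1)$ and $s_b(bm+2)=s_b(m+1)$), and appending any digit $j$ with $2\le j\le b-1$ acts by $(u,v)\mapsto(v,v)$ (since then $s_b(bm+j)=s_b(bm+j+1)=s_b(m+1)$, with $bm+b=b(m+1)$ when $j=b-1$). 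These three maps are entrywise monotone in $(u,v)$, and $(v,v)\le(u+v,v)$ entrywise; since $s_b(n)$ is the first coordinate of the state after all digits of $n$ are read, replacing any base $b$ digit of $n$ that is $\ge 2$ by $1$ cannot decrease $s_b(n)$, while it strictly decreases $n$ and keeps it in $[b^{k-2},b^{k-1})$. So both $\max_{b^{k-2}\le n<b^{k-1}}s_b(n)$ and the least $n$ attaining it are realised by an $n$ whose base $b$ digits all lie in $\{0,1\}$, necessarily with leading digit $1$.

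The empty string has state $(s_b(0),s_b(1))=(0,1)$; after reading the forced leading $1$ the state is $(1,1)$, and an $n\in[b^{k-2},b^{k-1})$ with $\{0,1\}$-digits then corresponds to $k-2$ further appends of $0$'s and $1$'s. Let $\Phi(u,v,r)$ denote the maximum over all length-$r$ append sequences from $(u,v)$ of the first coordinate of the resulting state, so $\Phi(u,v,0)=u$ and $\Phi(u,v,r)=\max\bigl(\Phi(u,u+v,r-1),\Phi(u+v,v,r-1)\bigr)$. Induction on $r$, using the Fibonacci recurrence, gives
\[\Phi(u,v,r)=F_{r+1}\max(u,v)+F_r\min(u,v)\qquad(r\ge 1).\]
Hence $s_b(n)\le\Phi(1,1,k-2)=F_{k-1}+F_{k-2}=F_k$ for $k\ge 3$, while $k=2$ gives $s_b(1)=1=F_2$ outright; this establishes $\max_{b^{k-2}\le n<b^{k-1}}s_b(n)\le F_k$.

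For the exact maximiser I would construct the lexicographically least $\{0,1\}$-string of length $k-1$ with leading digit $1$ whose final first coordinate equals $F_k$; on such strings lexicographic order agrees with numerical order, so this string is $a_k$. The closed form for $\Phi$ gives the greedy rule: with $r\ge 2$ appends still to go, appending $0$ preserves ``maximum attainable first coordinate $=F_k$'' exactly when the current $(u,v)$ has $u\ge v$, whereas on the last step ($r=1$) one is forced to append $1$. Starting from $(1,1)$, the greedy states are the consecutive Fibonacci pairs $(1,1),(1,2),(3,2),(3,5),(8,5),\dots$, and the greedy digits are $1$, then $0,1,0,1,\dots$ over positions $2$ through $k-2$, then $1$. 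Reading this string in base $b$ (a finite geometric series) yields $1+b^2+b^4+\cdots+b^{k-2}=\frac{b^k-1}{b^2-1}$ for $k$ even and $1+b+b^3+\cdots+b^{k-2}=\frac{b^k-1}{b^2-1}+\frac{b}{b+1}$ for $k$ odd, which is the claimed $a_k$; the state trace shows $s_b(a_k)=F_k$, so the maximum is attained.

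The pair dynamics, the monotonicity reduction, and the $\Phi$-induction are all routine once set up. I expect the main obstacle to be the minimality step: verifying that the greedy admissibility criterion is governed precisely by the sign of $u-v$ together with the special case $r=1$, checking that the greedy run terminates at the asserted string rather than at some other optimum, and tidying the small cases $k=2,3$ and the parity split in the formula for $a_k$.
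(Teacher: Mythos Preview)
The paper does not prove this lemma; it is quoted verbatim from Defant \cite{Defant} and used only as input for the subsequent recurrences and continued fractions. So there is no ``paper's own proof'' to compare against.

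Your argument is nonetheless correct and self-contained. The pair dynamics $(u,v)=(s_b(m),s_b(m+1))$ under digit-appending are exactly right, the monotone reduction to $\{0,1\}$-digit strings is valid (each transition is entrywise nondecreasing and $(v,v)\le(u+v,v)$), and the closed form $\Phi(u,v,r)=F_{r+1}\max(u,v)+F_r\min(u,v)$ follows by the induction you sketch, since appending $0$ or $1$ from $(u,v)$ gives states whose $\Phi$-values are $F_r(u+v)+F_{r-1}u$ and $F_r(u+v)+F_{r-1}v$ respectively. The greedy criterion ``append $0$ iff $u\ge v$ (and append $1$ on the last step)'' is precisely the condition for $\Phi$ to be preserved, and since the state after the leading $1$ runs through consecutive Fibonacci pairs $(1,1),(1,2),(3,2),(3,5),\ldots$, the tie $u=v$ occurs only once and the greedy string is uniquely determined. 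Your parity split and the geometric-series evaluation both check out against the stated formula for $a_k$, including the edge cases $k=2,3$.

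Compared with simply citing Defant, your approach has the advantage of being entirely elementary and of meshing directly with the matrix representation developed earlier in the paper: the three digit-maps you write down are exactly the $z_i=1$ specializations of $\mathbf{A}_0,\mathbf{A}_1,\mathbf{A}_i$ acting on the second column. Defant's original argument is structurally similar (he also reduces to binary digit strings and analyses the two-term dynamics), so you are not far from the literature proof, but you have organised the optimisation step around the explicit potential $\Phi$, which makes both the value $F_k$ and the lexicographic minimality of $a_k$ transparent in one stroke.
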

We will heavily depend on the base $b$ expansions $a_{2l}=((10)^{l-1}1)_b$ and $a_{2l+1} = ((10)^{l-1}11)_b$. In base $b=2$, these are the well known \textbf{Jacobsthal numbers} $\frac{1}{3}(2^n-(-1)^n)$.

We first note some simple properties of this sequence of maximal indices. This first result generalizes the simple recurrence $J_n=2J_{n-1}+(-1)^n$.
\begin{lem}
The base $b$ Jacobsthal numbers satisfy the recurrence
$$a_n=ba_{n-1}+1 -\frac{b}{2}(1+(-1)^n). $$
\end{lem}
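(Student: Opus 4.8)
The plan is to verify the recurrence directly from the closed form of $a_n$ furnished by the preceding lemma, for all $n\geq 3$ where both sides are defined. It is convenient to abbreviate $\epsilon_k := \frac{1-(-1)^k}{2}$, which equals $1$ when $k$ is odd and $0$ when $k$ is even, so that the lemma reads $a_k = \frac{b^k-1}{b^2-1} + \epsilon_k\,\frac{b}{b+1}$. Since $\frac{b}{2}\bigl(1+(-1)^n\bigr) = b\,(1-\epsilon_n)$, the asserted identity is equivalent to $a_n = b\,a_{n-1} + 1 - b\,(1-\epsilon_n)$; moreover $n-1$ has the parity opposite to $n$, so $\epsilon_{n-1} = 1 - \epsilon_n$.

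First I would substitute the closed form to get $b\,a_{n-1} = \frac{b^n-b}{b^2-1} + (1-\epsilon_n)\frac{b^2}{b+1}$. Using $\frac{b^n-b}{b^2-1} = \frac{b^n-1}{b^2-1} - \frac{1}{b+1}$ and subtracting off $a_n$, the claim collapses to the single scalar identity
\[
-\frac{1}{b+1} + (1-\epsilon_n)\frac{b^2}{b+1} + 1 - b\,(1-\epsilon_n) \;=\; \epsilon_n\,\frac{b}{b+1}.
\]
Multiplying through by $b+1$ turns this into $-1 + (1-\epsilon_n)b^2 + (b+1) - b(b+1)(1-\epsilon_n) = \epsilon_n b$, whose two sides are affine in the single indicator $\epsilon_n \in \{0,1\}$; hence it suffices to check the two endpoints. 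At $\epsilon_n = 0$ the left side is $-1 + b^2 + (b+1) - b(b+1) = 0$, and at $\epsilon_n = 1$ it is $-1 + (b+1) = b$, matching the right side in each case. This establishes the recurrence, and no step here is more than routine algebra.

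A more combinatorial route uses the base-$b$ expansions recorded just above the lemma, $a_{2l} = ((10)^{l-1}1)_b$ and $a_{2l+1} = ((10)^{l-1}11)_b$. For odd $n = 2l+1$, multiplying $a_{2l}$ by $b$ appends a trailing $0$ to give $((10)^l)_b$, and adding $1$ changes that final $0$ to a $1$, producing $((10)^{l-1}11)_b = a_{2l+1}$; for even $n = 2l$ one has $b\,a_{2l-1} = ((10)^{l-2}110)_b$, and adding $1-b$ replaces the trailing block $110$ (value $b^2+b$) by $101$ (value $b^2+1$), which regroups as $((10)^{l-1}1)_b = a_{2l}$. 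The only thing to watch in this second argument is that subtracting $b-1$ generates no borrow past the last two digits, which is automatic because $b\,a_{n-1}$ ends in $\ldots 10$, whose value is exactly $b > b-1$. I do not anticipate a genuine obstacle in either proof: the closed form from the previous lemma does essentially all the work, and the entire content is the careful bookkeeping of the parity indicator $\epsilon_n$ (equivalently, of the trailing digits).
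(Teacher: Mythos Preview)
Your proposal is correct. The second, combinatorial route you sketch---multiply the base-$b$ expansion by $b$ to append a trailing $0$ and then adjust the last digits, obtaining $a_{2l+1}=ba_{2l}+1$ and $a_{2l}=ba_{2l-1}+(1-b)$, then merge the two cases via the parity indicator---is precisely the paper's proof. Your first route, plugging in the closed form $a_k=\frac{b^k-1}{b^2-1}+\epsilon_k\frac{b}{b+1}$ from the preceding lemma and reducing to an affine identity in $\epsilon_n\in\{0,1\}$, is a valid alternative the paper does not present; it is purely mechanical and avoids any appeal to digit strings, at the cost of a few more lines of algebra. Either argument is adequate for this simple lemma.
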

\begin{proof}
Begin with the base $b$ expansions of $a_l$ and peel off the last digit to show
\begin{align*}
&a_{2l} = (1-b)+ba_{2l-1}\\
&a_{2l+1} = 1 + ba_{2l}.
\end{align*}
Rewriting the recurrence with a $1+(-1)^n$ indicator to account for the parity of $n$ completes the proof.
\end{proof}

We then study the values of the base $b$ Stern polynomials at these indices. In the $z_i=1$ limit we will recover Fibonacci numbers, so we are essentially studying some renormalized variant of Fibonacci polynomials.

\begin{thm}\label{thm-rec}
We have the recurrences 
\begin{align*}
 &w_T(a_{2l+1}|z_1,\ldots,z_b)  =  z_bw_T( a_{2l} |z_1^{t_1},\ldots,z_b^{t_b}) + z_1^{t_1}  w_T(a_{2l-1}  |z_1^{t_1^2},\ldots,z_b^{t_b^2}),\\ 
 &w_T(a_{2l+2}|z_1,\ldots,z_b)  =  w_T( a_{2l+1} |z_1^{t_1},\ldots,z_b^{t_b}) +z_b z_{b-1}^{t_{b-1}}  w_T(a_{2l}  |z_1^{t_1^2},\ldots,z_b^{t_b^2}). \\ 
\end{align*}

\end{thm}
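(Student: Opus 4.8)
The plan is to read off both recurrences directly from the base $b$ expansions $a_{2l+1} = ((10)^{l-1}11)_b$ and $a_{2l+2} = ((10)^{l-1}10\,1)_b$ (equivalently $a_{2l+2}=((10)^{l}1)_b$) by peeling the lowest one or two base-$b$ digits and applying the defining recurrences for $w_T$. Recall the recurrence shifts the parametrization by one step each time we peel a digit, so peeling the last digit $j$ of an index $bN + j$ gives $w_T(bN+j+1 \mid z_1,\ldots,z_b)$ in terms of $w_T(N+1 \mid z_1^{t_1},\ldots,z_b^{t_b})$ (and possibly $w_T(N+2\mid \cdots)$ when $j=b-1$, i.e. when the index is $bN+1$ after reindexing). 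The cleanest route is to work with the shifted indices: write $a_{2l+1} = b\,a_{2l} + (1-b)$ and $a_{2l+2} = b\,a_{2l+1}+1$ from the previous lemma, so that $a_{2l+1}+1 = b\,a_{2l} + (1-b) + 1 = b(a_{2l}-1)+ (b-1)+1 = b(a_{2l}-1) + b$, and $a_{2l+2}+1 = b\,a_{2l+1}+1+1 = b\,a_{2l+1} + 2$.

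For the first recurrence: applying the recurrence $w_T(bm+1\mid z) = z_b w_T(m\mid z^{t}) + w_T(m+1\mid z^{t})$ with $bm+1 = a_{2l+1}$, so $m = a_{2l}-1$ — wait, one must be careful: from $a_{2l+1}+1 = b(a_{2l}-1)+b$ I instead use the form $w_T(b(n-1)+j+1\mid z) = z_j w_T(n\mid z^t)$ for the digit $j=b-1$ is the special one, so the last digit of $a_{2l+1}$ being $1$ means $a_{2l+1} = bN+1$ with $N = a_{2l}$ (since $a_{2l} = (\cdots 1)_b$ ends in digit $1$... actually $a_{2l}=((10)^{l-1}1)_b$ ends in $1$), giving $w_T(a_{2l+1}\mid z) = w_T(bN+1\mid z) = z_b w_T(N\mid z^t) + w_T(N+1\mid z^t)$ with $N = \lfloor a_{2l+1}/b\rfloor = a_{2l}$. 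Hmm, but the stated right-hand side has $w_T(a_{2l}\mid z^t)$ and $w_T(a_{2l-1}\mid z^{t^2})$, so I need to expand $w_T(N+1\mid z^t) = w_T(a_{2l}+1\mid z^t)$ one more step: $a_{2l}+1$ ends (in base $b$) in digit $2$ if $a_{2l}$ ends in $1$, and $a_{2l}+1 = b\,a_{2l-1}' + 2$... more carefully, $a_{2l} = b\,a_{2l-1} + (1-b)$ so $a_{2l}+1 = b\,a_{2l-1} + (2-b) = b(a_{2l-1}-1) + 2$, i.e. the index $b(n-1)+j+1$ with $n = a_{2l-1}$, $j=1$, giving $w_T(a_{2l}+1\mid z^t) = z_1^{t_1} w_T(a_{2l-1}\mid z^{t^2})$. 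Substituting back yields exactly the first claimed recurrence. The second recurrence is the same kind of two-step peeling: $a_{2l+2}$ ends in digit $1$ (it is $((10)^l 1)_b$), so $w_T(a_{2l+2}\mid z) = w_T(bN+1\mid z) = z_b w_T(N\mid z^t) + w_T(N+1\mid z^t)$ with $N = a_{2l+1}$; here one needs $w_T(a_{2l+1}\mid z^t)$ expanded via $a_{2l+1}$ ending in digit $1$ again — no wait, the claimed RHS is $w_T(a_{2l+1}\mid z^t) + z_b z_{b-1}^{t_{b-1}} w_T(a_{2l}\mid z^{t^2})$, which suggests instead peeling so that $a_{2l+2} = b(n-1)+j+1$ with the last two digits being $0,1$: indeed $a_{2l+2}=((10)^l 1)_b$, second-to-last digit $0$. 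So $a_{2l+2} - 1 = ((10)^{l-1}1 0\, 0)_b$, hence $a_{2l+2} = b(n-1) + 0 + 1$ with $n-1 = ((10)^{l-1}10)_b = a_{2l+1}-1$... this gives $w_T(a_{2l+2}\mid z) = w_T(b(n-1)+1\mid z)$. Here $b(n-1)+1$ is of the form $bm+1$ with $m = n-1 = a_{2l+1}-1$, so $w_T = z_b w_T(a_{2l+1}-1\mid z^t) + w_T(a_{2l+1}\mid z^t)$; then expand $w_T(a_{2l+1}-1\mid z^t)$: $a_{2l+1}-1 = ((10)^{l-1}1 0)_b$ ends in digit $0$, i.e. $= b(m'-1) + (b-1) + 1 - $ hmm, digit $0$ means $a_{2l+1}-1 = bM$, $M = ((10)^{l-1}1)_b = a_{2l}$, so $a_{2l+1}-1 = b(a_{2l}-1) + (b-1)+1$, i.e. $b(n'-1)+j+1$ with $n'=a_{2l}$, $j=b-1$; but $j=b-1$ is exactly the case $w_T(bn'+1\mid \cdot)$... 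I must reconcile: when $j=b-1$, $b(n-1)+j+1 = bn$, and the recurrence given for that case is $w_T(bn\mid z) = z_{b-1} w_T(n\mid z^t)$ (the $j=b-1$ instance of the first family). So $w_T(a_{2l+1}-1\mid z^t) = w_T(b\,a_{2l}\mid z^t) = z_{b-1}^{t_{b-1}} w_T(a_{2l}\mid z^{t^2})$. Substituting gives $w_T(a_{2l+2}\mid z) = z_b z_{b-1}^{t_{b-1}} w_T(a_{2l}\mid z^{t^2}) + w_T(a_{2l+1}\mid z^t)$, matching the second claim.

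So the proof I would write is: (1) state the base-$b$ digit expansions of $a_n$ from the previous lemma, noting $a_{2l}=((10)^{l-1}1)_b$ and $a_{2l+1}=((10)^{l-1}11)_b$; (2) peel the last one or two digits of $a_{2l+1}$ and $a_{2l+2}$, carefully tracking the exponent substitution $z_i \mapsto z_i^{t_i}$ at each peel and iterating it to get $z_i \mapsto z_i^{t_i^2}$ after two peels; (3) assemble the terms. I would present both computations as short displayed chains of equalities.

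\begin{proof}
By the previous lemma the base $b$ expansions of the maximal indices are $a_{2l}=((10)^{l-1}1)_b$ and $a_{2l+1}=((10)^{l-1}11)_b$, so that $a_{2l+2}=((10)^{l}1)_b$. We repeatedly apply the defining recurrences for $w_T$, recalling that peeling one base-$b$ digit replaces each $z_i$ by $z_i^{t_i}$, and hence peeling two digits replaces each $z_i$ by $z_i^{t_i^2}$.

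\textit{First recurrence.} The index $a_{2l+1}$ ends in the digit $1$, so writing $a_{2l+1}=bN+1$ with $N=\lfloor a_{2l+1}/b\rfloor=a_{2l}$ gives
\begin{align*}
w_T(a_{2l+1}\mid z_1,\ldots,z_b) &= z_b\,w_T(a_{2l}\mid z_1^{t_1},\ldots,z_b^{t_b}) + w_T(a_{2l}+1\mid z_1^{t_1},\ldots,z_b^{t_b}).
\end{align*}
Since $a_{2l}=b(a_{2l-1}-1)+2$ (the last two digits of $a_{2l}$ being $0,1$, equivalently $a_{2l}+1=b(a_{2l-1}-1)+1+1$), the digit $j=1$ recurrence yields
$w_T(a_{2l}+1\mid z_1^{t_1},\ldots,z_b^{t_b})=z_1^{t_1}\,w_T(a_{2l-1}\mid z_1^{t_1^2},\ldots,z_b^{t_b^2})$, and substituting gives the first identity.

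\textit{Second recurrence.} The index $a_{2l+2}=((10)^l 1)_b$ has last two digits $0,1$, so $a_{2l+2}=b(M-1)+0+1$ with $M-1=a_{2l+1}-1$, i.e. $a_{2l+2}=b(a_{2l+1}-1)+1$. The digit $j=b-1$ instance of the recurrence (the case $w_T(bn\mid\cdot)=z_{b-1}\,w_T(n\mid\cdot)$) gives, after one peel,
\begin{align*}
w_T(a_{2l+2}\mid z_1,\ldots,z_b) &= z_b\,w_T(a_{2l+1}-1\mid z_1^{t_1},\ldots,z_b^{t_b}) + w_T(a_{2l+1}\mid z_1^{t_1},\ldots,z_b^{t_b}).
\end{align*}
Now $a_{2l+1}-1=((10)^{l-1}10)_b$ ends in the digit $0$, so $a_{2l+1}-1=b\,a_{2l}$, and the $j=b-1$ recurrence again gives
$w_T(a_{2l+1}-1\mid z_1^{t_1},\ldots,z_b^{t_b})=z_{b-1}^{t_{b-1}}\,w_T(a_{2l}\mid z_1^{t_1^2},\ldots,z_b^{t_b^2})$. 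Substituting yields the second identity.
\end{proof}

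The only point requiring care — and the main obstacle — is bookkeeping the base-$b$ digit patterns of $a_{2l+1}$ and $a_{2l+2}$ so that one correctly identifies which of the $b$ recurrences applies at each peel (in particular recognizing that a trailing digit $0$ corresponds to the $j=b-1$ case $w_T(bn\mid\cdot)=z_{b-1}w_T(n\mid\cdot)$), and tracking the iterated exponent substitution $z_i\mapsto z_i^{t_i}\mapsto z_i^{t_i^2}$; everything else is a direct two-step unwinding of the definition.
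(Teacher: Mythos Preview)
Your proof is correct and follows essentially the same approach as the paper: peel off the last two base-$b$ digits of $a_{2l+1}$ and $a_{2l+2}$ and apply the defining recurrences, tracking the exponent substitutions. One small slip to fix: in the first recurrence you write ``$a_{2l}=b(a_{2l-1}-1)+2$'' when you mean $a_{2l}+1=b(a_{2l-1}-1)+2$ (as your parenthetical correctly states); also note that your index-arithmetic formulation handles $b=2$ uniformly, whereas the paper's digit-string version has to remark separately on that case.
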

\begin{proof}
We will now work directly with the digit expansions of $a_n$ and apply the recursive definitions of the Stern polynomials. In particular, assuming $b\geq 3$, we have
\begin{align*}
 w_T(a_{2l+1}|z_1,\ldots,z_b)  &= w_T( ((10)^{l-1}11)_b  |z_1,\ldots,z_b) \\
 &= z_bw_T( ((10)^{l-1}1)_b  |z_1^{t_1},\ldots,z_b^{t_b}) +   w_T( ((10)^{l-2}102)_b  |z_1^{t_1},\ldots,z_b^{t_b}) \\
 &=  z_bw_T( a_{2l} |z_1^{t_1},\ldots,z_b^{t_b}) + z_1^{t_1}  w_T( ((10)^{l-2}11)_b  |z_1^{t_1^2},\ldots,z_b^{t_b^2}) .\\
\end{align*}
If $b=2$ we have $ ((10)^{l-2}102)_b =  ((10)^{l-2}110)_b$ at the second step, but since $z_1 = z_{b-1}$ we obtain the same final recurrence. We perform a similar calculation with $a_{2l+2} = ((10)^l1)_b$ to obtain the second recurrence.
\end{proof}
Interestingly, this recurrence shows that we can set $z_i=0, 2 \leq i \leq b-2,$ in the base $b$ Stern polynomials without affecting their values. This is equivalent to saying that every hyperbinary expansion of $a_n$ is formed of parts with multiplicity $1, b-1,$ or $b$.

Since we have a three term recurrence, we can now write down a simple ``two level" continued fraction involving a ratio of Stern polynomials. If we set $z_i=1$ the limit of this ratio is $\lim_{k\to \infty}F_k / F_{k-1} = \frac12 (1+\sqrt{5})$, the golden ratio. Therefore our continued fraction can be regarded as a polynomial generalization of the canonical continued fraction
$$\phi :=  \frac12 (1+\sqrt{5})  =  1 +\cfrac{1}{1+ \cfrac{1}{1+ \cfrac{1}{1+ \ddots}} }.$$
%We will adopt the Gauss continued fraction notation
%$$b_0 +\cfrac{a_1}{b_1+ \cfrac{a_2}{b_2+ \cfrac{a_3}{b_3+ \ddots}} }:= b_0 + K_{j=1}^\infty \frac{a_j}{b_j},$$
%with finite fractions given by a finite upper limit instead.

\begin{thm}
We have the continued fractions
\begin{equation*}
\frac{w_T(a_{2l+1}|z_1,\ldots,z_b)}{w_T( a_{2l} |z_1^{t_1},\ldots,z_b^{t_b})} = z_b + \cfrac{z_1^{t_1}}{
1 + \cfrac{z_b^{t_b}z_{b-1}^{t_{b-1}^2}}{
\ddots  + \cfrac{z_b^{t_b^{2l-3}} z_{b-1}^{t_{b-1}^{2l-2}}}{z_b^{t_b^{2l-2}} +z_1^{t_1^{2l-2}} } } } 
\end{equation*}
and
\begin{equation*}
\frac{w_T(a_{2l+2}|z_1,\ldots,z_b)}{w_T( a_{2l+1} |z_1^{t_1},\ldots,z_b^{t_b})} = 1 + \cfrac{z_bz_{b-1}^{t_{b-1}}}{
z_b^2 + \cfrac{z_1^{t_1^2}}{
\ddots +  \cfrac{z_b^{t_b^{2l-2}} z_{b-1}^{t_{b-1}^{2l-1}}}{z_b^{t_b^{2l-1}} +z_1^{t_1^{2l-1}} } } } .
\end{equation*}
\end{thm}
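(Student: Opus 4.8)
The plan is to read both continued fractions off the two three-term recurrences of Theorem~\ref{thm-rec}, using the elementary fact that a recurrence $x_n = p\,x_{n-1} + q\,x_{n-2}$ rearranges to $x_n/x_{n-1} = p + q/(x_{n-1}/x_{n-2})$. The one feature that is not completely routine is the shift in the parametrization: in each recurrence the three Stern polynomials are evaluated at $z_i$, at $z_i^{t_i}$, and at $z_i^{t_i^2}$, so I first record a shift operator and its formal properties.

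Let $S$ be the endomorphism of the coefficient ring determined by $z_i \mapsto z_i^{t_i}$ for all $i$; equivalently $S$ raises every power $t_i^k$ occurring in a monomial to $t_i^{k+1}$. Then $S\,w_T(n|z_1,\ldots,z_b) = w_T(n|z_1^{t_1},\ldots,z_b^{t_b})$, and $S$ commutes with forming ratios. Since each $w_T(m|z_1,\ldots,z_b)$ is a nonzero sum of monomials with unit coefficients (by the combinatorial expansion recalled above), every polynomial we divide by is nonzero, so the manipulations below take place in the field of rational functions in the $z_i$. Put
$$R_m := \frac{w_T(a_{m+1}|z_1,\ldots,z_b)}{S\,w_T(a_m|z_1,\ldots,z_b)} = \frac{w_T(a_{m+1}|z_1,\ldots,z_b)}{w_T(a_m|z_1^{t_1},\ldots,z_b^{t_b})},$$
so that the two ratios appearing in the theorem are exactly $R_{2l}$ and $R_{2l+1}$. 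Dividing the first recurrence of Theorem~\ref{thm-rec} by $S\,w_T(a_{2l}|z_1,\ldots,z_b)$ and the second by $S\,w_T(a_{2l+1}|z_1,\ldots,z_b)$, and noting that $S^2 w_T(a_{m-1})/S\,w_T(a_m) = 1/(S R_{m-1})$, yields the coupled pair
$$R_{2l} = z_b + \cfrac{z_1^{t_1}}{S R_{2l-1}}, \qquad R_{2l+1} = 1 + \cfrac{z_b z_{b-1}^{t_{b-1}}}{S R_{2l}},$$
valid for every $l \ge 1$ (for $b = 2$ one simply has $z_{b-1} = z_1$, exactly as in the proof of Theorem~\ref{thm-rec}).

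From here I would iterate these two identities alternately, at each stage pushing $S$ through the nested fraction below; because $S$ is a ring homomorphism, this uniformly increments every $t_i$-power occurring there by $1$. The recursion terminates at the single-digit base values $R_1 = w_T(1|z_1,\ldots,z_b)/S\,w_T(1|z_1,\ldots,z_b) = 1$ and $R_2 = z_b + z_1^{t_1}$ (a one-line computation from the defining recurrences, or what the pair above returns when fed $R_1 = 1$). Unwinding $R_{2l}$ through $2l-1$ levels down to $S^{2l-1}R_1$, and $R_{2l+1}$ through $2l$ levels down to $S^{2l}R_1$ (equivalently $S^{2l-1}R_2$), produces the two finite continued fractions claimed. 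A short induction on the depth — assume the displayed shape for $R_{2l-2}$, resp.\ $R_{2l-1}$, apply $S$, prepend the new top level $z_b + z_1^{t_1}/(\cdot)$, resp.\ $1 + z_b z_{b-1}^{t_{b-1}}/(\cdot)$ — then verifies that the exponents at each level are as stated; setting $z_i = 1$ collapses everything to $\phi = 1 + 1/(1 + 1/(1 + \cdots))$ and serves as a consistency check.

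The only part I expect to require care is the exponent bookkeeping: one descent of the continued fraction corresponds to one more application of $S$, so the $k$-th level from the top carries exponents $t_i^{k}$ or $t_i^{k+1}$, and one must track how the extra $t_1$ in the numerator $z_1^{t_1}$ (and the $t_{b-1}$ in $z_{b-1}^{t_{b-1}}$) combines with the accumulated shift, so that the bottom level comes out as the image of the base case under the correct power of $S$. Everything else — the passage to a three-term recurrence, the legitimacy of the divisions, and the degenerate case $b = 2$ — is immediate once Theorem~\ref{thm-rec} is available.
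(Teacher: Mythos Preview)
Your approach is essentially the paper's: divide each recurrence of Theorem~\ref{thm-rec} by its middle term to get the coupled pair $R_{2l}=z_b+z_1^{t_1}/(SR_{2l-1})$ and $R_{2l+1}=1+z_bz_{b-1}^{t_{b-1}}/(SR_{2l})$, then iterate alternately down to the base values. Your shift operator $S$ is a cleaner way to keep track of the parameter substitutions than the paper's explicit exponents, and your remark that the $w_T$ are nonzero (so the divisions are legitimate) is a point the paper leaves implicit, but the underlying argument is identical.
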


\begin{proof}
Begin with the recurrences of Theorem \ref{thm-rec} and divide by the middle term. We obtain 
\begin{equation}\label{eq41}
\frac{w_T(a_{2l+1}|z_1,\ldots,z_b)}{w_T( a_{2l} |z_1^{t_1},\ldots,z_b^{t_b})} = z_b + \cfrac{z_1^{t_1}}{
\left( \cfrac{w_T( a_{2l} |z_1^{t_1},\ldots,z_b^{t_b}) }{w_T(a_{2l-1}  |z_1^{t_1^2},\ldots,z_b^{t_b^2})} \right) } 
\end{equation}
and
\begin{equation}\label{eq42}
\frac{w_T(a_{2l+2}|z_1,\ldots,z_b)}{w_T( a_{2l+1} |z_1^{t_1},\ldots,z_b^{t_b})} = 1 + \cfrac{z_bz_{b-1}^{t_{b-1}}}{
\left( \cfrac{w_T( a_{2l+1} |z_1^{t_1},\ldots,z_b^{t_b}) }{w_T(a_{2l}  |z_1^{t_1^2},\ldots,z_b^{t_b^2})} \right) } .
\end{equation}
Substituting \eqref{eq41} into \eqref{eq42} and vice verse gives
\begin{equation*}
\frac{w_T(a_{2l+1}|z_1,\ldots,z_b)}{w_T( a_{2l} |z_1^{t_1},\ldots,z_b^{t_b})} = z_b + \cfrac{z_1^{t_1}}{
1 + \cfrac{z_b^{t_b}z_{b-1}^{t_{b-1}^2}}{
\left( \cfrac{w_T( a_{2l-1} |z_1^{t_1^2},\ldots,z_b^{t_b^2}) }{w_T(a_{2l-2}  |z_1^{t_1^3},\ldots,z_b^{t_b^3})} \right) } } 
\end{equation*}
and
\begin{equation*}
\frac{w_T(a_{2l+2}|z_1,\ldots,z_b)}{w_T( a_{2l+1} |z_1^{t_1},\ldots,z_b^{t_b})} = 1 + \cfrac{z_bz_{b-1}^{t_{b-1}}}{
z_b^2 + \cfrac{z_1^{t_1^2}}{
\left( \cfrac{w_T( a_{2l} |z_1^{t_1^2},\ldots,z_b^{t_b^2}) }{w_T(a_{2l-1}  |z_1^{t_1^3},\ldots,z_b^{t_b^3})} \right) }  } .
\end{equation*}
We can now avoid mixing recurrences, and iterate this construction while noting the base cases $w_T(a_{2}|z_1,\ldots,z_b) = 1$ and $w_T(a_{3}|z_1,\ldots,z_b) = z_b+z_1$.
\end{proof}
These give extremely fastly converging approximations for $w_T$ at maximal indices, since $z_i^{t_i^j} \to 0$ extremely quickly for small $z_i$.
%TODO ask karl how he typeset his K operator

\section{Acknowledgements}
%Tanay would like to thank the other authors for contributing no ideas but many good vibes, which are much more important than doing this boring math junk anyways. 
Tanay would like to thank Karl Dilcher for his endless patience, useful discussions, and invitation to Dalhousie. Many thanks also go out to Christophe Vignat, Larry Washington, and especially Wiseley Wong for thoroughly reading various versions of this work.


\begin{thebibliography}{1}
\bibitem{Allouche}J.-P. Allouche and J. Shallit, Automatic
Sequences, Theory, Applications, Generalizations. Cambridge University
Press (2003)

\bibitem{Churchhouse}R. F. Churchhouse, Congruence properties of the binary partition function, {Proc. Cambridge Philos. Soc.}, \textbf{66}, 371--376 (1969)

\bibitem{Coons}M. Coons and L. Spiegelhofer, The maximal order of hyper-({$b$}-ary)-expansions, {Electron. J. Combin.}, \textbf{24: 1}, Paper 1.15, 1--8 (2016)

\bibitem{Defant}C. Defant, Upper bounds for {S}tern's diatomic sequence and related
              sequences, {Electron. J. Combin.}, \textbf{23: 4}, Paper 4.8, 1--47 (2016)

\bibitem{Karl3}K. Dilcher and L. Ericksen, Hyperbinary expansions and {S}tern polynomials, {Electron. J. Combin.}, \textbf{22: 2}, Paper 2.24, 1--18 (2015)

\bibitem{Karl4}K. Dilcher and L. Ericksen, Continued fractions and {S}tern polynomials, {Ramanujan J.}, \textbf{45: 3}, 659--681 (2018)

\bibitem{Karl1}K. Dilcher and L. Ericksen, Polynomials characterizing hyper $b$-ary representations,
Journal of Integer Sequences, Vol. 21, Article 18.4.3 (2018)

\bibitem{Karl2}K. Dilcher and K. Stolarsky, A polynomial analogue to the {S}tern sequence, {Int. J. Number Theory}, \textbf{3: 1}, 85--103 (2007)

\bibitem{Klavzar}S. Klav\v{z}ar, U. Milutinovi\'c, and C. Petr, Stern polynomials, {Adv. in Appl. Math.}, \textbf{39: 1}, 86--95 (2007)


\bibitem{Mansour}T. Mansour, {{$q$}-{S}tern polynomials as numerators of continued
              fractions}, {Bull. Pol. Acad. Sci. Math.}, \textbf{63: 1}, 11--18 (2015)

\bibitem{Spiegelhofer}L. Spiegelhofer, {A digit reversal property for {S}tern polynomials}, {Integers}, \textbf{17}, Paper A53 (2017)

%\bibitem{Northshield}S. Northshield, Three analogues of {S}tern's diatomic sequence, {Fibonacci Quart.}, \textbf{52: 5}, 168--186 (2014)

%\bibitem{Valerio}V. De Angelis, The {S}tern diatomic sequence via generalized {C}hebyshev
%        polynomials, Amer. Math. Monthly, \textbf{124: 5}, 451--455 (2017)




\end{thebibliography}
\end{document}